\newtheorem{thm}{Theorem}[section]
\newtheorem{rem}[thm]{\bf{Remark}}
\numberwithin{equation}{section}
\def\pn{\par\noindent}
\begin{document}
%------------------------------------------------------------------------------------%
%%Don not change any thing in this part
%\hskip -0.2 cm
%\begin{tabular}{c r}
%\vspace{-0.6cm}
%\includegraphics[width=1.9cm]{IJGT-logo}\\
%\href{http://www.theoryofgroups.ir}{\scriptsize  \rm www.theoryofgroups.ir}\\
%\end{tabular}
%\hskip 2 cm
%\begin{tabular}{l}
%\hline
%\vspace{-0.2cm}
%\scriptsize \rm\bf International Journal of Group Theory\\
%\vspace{-0.2cm}
%\scriptsize \rm ISSN (print): 2251-7650, ISSN (on-line): 2251-7669 \\
%\vspace{-0.2cm}
%\scriptsize Vol. {\bf\rm x} No. x {\rm(}201x{\rm)}, pp. xx-xx.\\
%\scriptsize $\copyright$ 201x University of Isfahan\\
%\hline
%\end{tabular}
%\hskip 2 cm
%\begin{tabular}{c c}
%\vspace{-0.1cm}
%\includegraphics[width=1.9cm]{UI-logo}\\
%\href{http://www.ui.ac.ir}{\scriptsize \rm www.ui.ac.ir}\\
%\vspace{-1cm}
%\end{tabular}
%\vspace{1.3 cm}

%------------------------------------------------------------------------------------%

\title{A note on noninner automorphisms of order $p$ for  finite $p$-groups of coclass $2$}
\author{A. Abdollahi$^*$, S. M. Ghoraishi and  B. Wilkens}

\thanks{{\scriptsize
\hskip -0.4 true cm MSC(2010): Primary: 20D45 Secondary: 20E36.
\newline Keywords: Non-inner automorphism; finite $p$-groups; Coclass $2$.\\
%Received: dd mmmm yyyy, Accepted: dd mmmm yyyy.\\
$*$Corresponding author}}
\maketitle

%------------------------------------------------------------------------------------%
%This part will be filled in by IJGT
%\begin{center}
%Communicated by\;
%\end{center}
%------------------------------------------------------------------------------------%

\begin{abstract}  In this note, the existence of noninner automorphisms of order $2$ for finite $2$-groups of coclass $2$ is proved.
Combining our result with a recent one due to Y. Guerboussa and  M. Reguiat (see \href{http://arxiv.org/pdf/1301.0085v1.pdf}{arXiv:1301.0085}), we prove that every finite  $p$-group of coclass $2$ has a noninner automorphism of order $p$ leaving  the center elementwise fixed.
\end{abstract}

%\vskip 0.2 true cm

%------------------------------------------------------------------------------------%

%\pagestyle{myheadings}
%
%\markboth{\rightline {\sl Int. J. Group Theory x no. x (201x) xx-xx \hskip 4 cm  A. Abdollahi, S. M. Ghoraishi and  B. Wilkens }}
 %        {\leftline{\sl Int. J. Group Theory x no. x (201x) xx-xx \hskip 4 cm A. Abdollahi, S. M. Ghoraishi and  B. Wilkens }}

%\bigskip
%\bigskip

%------------------------------------------------------------------------------------%
%------------------------------------------------------------------------------------%
\section{\bf Introduction}
\vskip 0.4 true cm

Let $G$ be a finite nonabelian $p$-group. A longstanding conjecture asserts that every finite nonabelian $p$-group admits a noninner
automorphism of order $p$. This conjecture has been settled  for various  classes of $p$-groups $G$ as follows:
\begin{itemize}
\item if $G$ is regular \cite{S, DS};
\item if $G$ is nilpotent of class $2$ or $3$ \cite{A,AGW,L};
\item if the commutator subgroup of $G$ is cyclic \cite{jam};
\item if $G/Z(G)$ is powerful, \cite{AB};
\item if $C_G(Z(\Phi(G)))\neq \Phi(G)$ \cite{DS};
\item if $p$ is odd and $G$ is of coclass $2$ \cite{yass};
\end{itemize}
It is proved that $G$ has often a noninner automorphism of order $p$ leaving the center $Z(G)$  or Frattini subgroup $\Phi(G)$ of $G$ elementwise fixed.

It is proved in \cite{AG} that if  $G$ is regular or  nilpotent of class $2$ or if  $G/Z(G)$ is powerful or  $C_G(Z(\Phi(G)))\neq \Phi(G)$, then $G$ has a noninner automorphism of order $p$ acting trivially on $Z(G)$.

As we mentioned above, the conjecture is valid for $p$-groups of coclass $2$ if $p>2$.
In this note, we confirm the validity of the conjecture for groups of coclass $2$ by proving the following:
\begin{thm}\label{main}
All finite $2$-groups of coclass $2$ have  noninner automorphisms of order $2$ leaving  the center  elementwise fixed.
\end{thm}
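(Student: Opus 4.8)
The plan is to argue by contradiction, letting $G$ be a counterexample of least order. By \cite{AG}, the center-fixing form of the conjecture already holds whenever $G$ has nilpotency class at most $2$, or $G/Z(G)$ is powerful, or $C_G(Z(\Phi(G)))\neq\Phi(G)$; since the coclass-$2$ groups of small order can also be settled by direct inspection, we may assume from the outset that $\mathrm{cl}(G)$, hence $|G|$, is as large as we wish and that $C_G(Z(\Phi(G)))=\Phi(G)$ (which, in particular, forces $Z(G)\leq C_G(\Phi(G))\leq C_G(Z(\Phi(G)))=\Phi(G)$).

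Next I would try to finish by counting central automorphisms. The automorphisms of $G$ that fix $Z(G)$ and $G/Z(G)$ elementwise and square to the identity form a group isomorphic to $\mathrm{Hom}\bigl(G/\Phi(G)Z(G),\,\Omega_1(Z(G))\bigr)$, of order $2^{es}$ with $e=d(G/\Phi(G))$ and $s=d(\Omega_1(Z(G)))$, and those of them that are inner embed into $Z_2(G)/Z(G)$; hence the inequality $2^{es}>|Z_2(G)/Z(G)|$ would already finish the proof. Since $G$ has coclass $2$, both $|Z(G)|$ and $|Z_2(G)/Z(G)|$ are small --- $\gamma_{\mathrm{cl}(G)}(G)\leq Z(G)$ and almost all lower central factors of $G$ are cyclic --- so this inequality can fail only in the tightly constrained situation $d(G)=2$, $Z(G)$ cyclic, $|Z_2(G)/Z(G)|\geq 4$.

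For the surviving situation I would pass to an explicit construction built on a maximal normal abelian subgroup $A$ of $G$: then $C_G(A)=A$, the index $|G/A|$ is bounded by the coclass-$2$ structure theory, and $Z(G)\leq A$. The automorphisms of $G$ that restrict to the identity on $A$ and act trivially on $G/A$ are parametrized by the cocycle group $Z^1(G/A,A)$, the inner ones among them corresponding exactly to $B^1(G/A,A)\cong A/Z(G)$; hence a noninner automorphism of order $2$ fixing $A$ --- and therefore $Z(G)$ --- elementwise exists as soon as the relevant $2$-torsion in $H^1(G/A,A)$ is nonzero. It then remains to prove this cohomology does not vanish, exploiting that $G/A$ is a small $2$-group acting faithfully (by $C_G(A)=A$) on the abelian group $A$ whose layers are governed by the almost-uniserial shape of $\gamma_2(G)$. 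Here the prime $2$ genuinely intervenes, through the single exceptional lower central factor of order $4$ and through the norm and squaring maps that replace the smoother identities available for odd $p$ in \cite{yass}.

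The step I expect to be the main obstacle is exactly this nonvanishing --- equivalently, choosing elements $a_i\in A$ on a transversal $\{t_i\}$ of $G/A$ so that $\phi|_A=\mathrm{id}$, $\phi(t_i)=t_ia_i$ defines an automorphism of order $2$, and then verifying that the $a_i$ are compatible with the power and commutator relations of $G$. That verification cannot avoid the fine structure of $2$-groups of coclass $2$ (the exact shape of the lower central series, the module structure of $A$, the value of $|G/A|$), and it is where the argument must depart from the odd-$p$ treatment of \cite{yass}. A secondary, more bookkeeping difficulty is to make the coclass-$2$ estimates in the counting step uniform enough that only finitely many small groups are left over for direct verification.
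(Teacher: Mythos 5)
Your opening reductions do match the paper's first step: the counting of central automorphisms of order dividing $2$ against $Z_2(G)/Z(G)$, together with the Deaconescu--Silberberg criterion $C_G(Z(\Phi(G)))=\Phi(G)$, is exactly how the paper (via its Remarks on \cite{DS} and \cite[Lemma 2.2]{AB}) pins the hypothetical counterexample down to $d(G)=2$, $|Z(G)|=2$ and $Z_2(G)/Z(G)$ elementary abelian of rank $2$. But from that point on your plan has a genuine gap: you propose to take a maximal abelian normal subgroup $A$ with $C_G(A)=A$ and to produce an order-$2$ element of $Z^1(G/A,A)$ outside $B^1(G/A,A)$, and you yourself flag that this nonvanishing is ``the main obstacle'' without carrying it out. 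That step is not a technicality one can defer --- producing an automorphism of order exactly $p$ (rather than merely an outer automorphism of $p$-power order, which Gasch\"utz-type cohomology does give) from such derivation groups is precisely the hard core of the general conjecture, and nothing in your sketch uses the coclass-$2$ hypothesis in a way that makes it tractable. (Also, ``we may assume $|G|$ is as large as we wish'' cannot be bought by inspecting finitely many small groups unless you first establish a bound, which you don't.) So as written the proposal does not constitute a proof.

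The paper's actual endgame is quite different and avoids cohomology altogether. From the reductions one gets that $Z_2(G)$ is noncyclic abelian of order $8$ and that $\widetilde G=G/Z_2(G)$ is a $2$-group of maximal class, hence dihedral, semidihedral or generalized quaternion by the classification \cite{leed}. The key observation is that $Z_2(G)/Z(G)=Z(G/Z(G))$, so $\widetilde G$ is capable; by Shahriari's theorem \cite{Sh} semidihedral and generalized quaternion groups cannot occur as (normal subgroups of) capable groups, so $\widetilde G$ is dihedral. A short computation with a dihedral presentation $G=\langle x,y\rangle$ then shows $G'=\langle [x,y]\rangle$ is cyclic, and the Jamali--Viseh theorem \cite{jam} for $2$-groups with cyclic commutator subgroup yields a noninner automorphism of order $2$ fixing $\Phi(G)$ or $Z(G)$ elementwise (either option fixes the center here, since $|Z(G)|=2$ and $Z(G)\leq\Phi(G)$), the desired contradiction. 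If you want to salvage your approach, you would need to supply the missing $H^1$-nonvanishing argument in the reduced situation; the capability-plus-cyclic-derived-subgroup route is the device the paper uses precisely to sidestep that difficulty.
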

As a consequence of \cite[Theorem 5.1]{yass} and Theorem \ref{main}, we prove that:
\begin{thm}\label{main1}
Every finite  $p$-group of coclass $2$ has a noninner automorphism of order $p$ leaving  the center elementwise fixed.
\end{thm}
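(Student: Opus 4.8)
The plan is simply to split on the parity of $p$ and invoke the two available inputs, since every finite $p$-group of coclass $2$ is automatically nonabelian. If $p=2$, then $G$ is a finite $2$-group of coclass $2$, and Theorem \ref{main} produces directly a noninner automorphism of order $2$ of $G$ leaving $Z(G)$ elementwise fixed; there is nothing more to prove in this case. If $p$ is odd, then $G$ is a $p$-group of coclass $2$ with $p>2$, and I would apply \cite[Theorem 5.1]{yass}, which yields a noninner automorphism of order $p$ of $G$; the only thing to verify is that this automorphism can be taken to act trivially on $Z(G)$, which is precisely the form in which that theorem is stated. Taking the two cases together exhausts all finite $p$-groups of coclass $2$, and Theorem \ref{main1} follows.

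Accordingly, the write-up is essentially bookkeeping: record that \cite[Theorem 5.1]{yass} supplies not merely \emph{some} noninner automorphism of order $p$ but one fixing the center elementwise, and record that Theorem \ref{main} covers the whole $p=2$ case with no further hypotheses needed. All of the genuine mathematical content sits in Theorem \ref{main} (the new contribution, handling $p=2$) and in \cite{yass} (handling odd $p$), so at this stage no real obstacle remains.

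If one wanted to be cautious, the single point worth double-checking is the center-fixing clause in the odd case: should \cite[Theorem 5.1]{yass} only guarantee a noninner automorphism of order $p$ without control on its action on $Z(G)$, one would route its conclusion through the machinery of \cite{AG} to upgrade it to a center-fixing automorphism. As the cited theorem is stated, however, this extra step is not needed, and the proof of Theorem \ref{main1} is complete once the two cases are assembled.
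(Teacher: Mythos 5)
Your parity split is fine in the even case, since Theorem \ref{main} does supply a center-fixing noninner automorphism of order $2$; the genuine gap is in the odd case, where everything rests on your claim that \cite[Theorem 5.1]{yass} is stated with the center-fixing clause. It is not: that result only gives, for odd $p$, the existence of a noninner automorphism of order $p$, with no control on its action on $Z(G)$. The paper's own structure makes this plain --- Theorem \ref{main0} (existence only) is exactly what comes from combining Theorem \ref{main} with \cite[Theorem 5.1]{yass}, and Theorem \ref{main1} is then given a separate, nontrivial proof precisely because the center-fixing property does not come for free; likewise the introduction lists odd-$p$ coclass $2$ among the cases where the bare conjecture is known, but not among the cases from \cite{AG} where a center-fixing automorphism is known. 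Your fallback, to ``route the conclusion through the machinery of \cite{AG}'', is not a repair: \cite{AG} treats specific families (regular groups, class $2$, $G/Z(G)$ powerful, $C_G(Z(\Phi(G)))\neq \Phi(G)$) and contains no general device that upgrades an arbitrary noninner automorphism of order $p$ to one fixing $Z(G)$ elementwise.

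The paper closes this gap uniformly in $p$ by splitting on $|Z(G)|\in\{p,p^2\}$ rather than on parity. If $|Z(G)|=p$, any automorphism of order $p$ restricts trivially to $Z(G)$ because $\mathrm{Aut}(Z(G))$ has order $p-1$, so Theorem \ref{main0} already suffices. If $|Z(G)|=p^2$ and $Z(G)\not\leq\Phi(G)$, choose $g\in Z(G)$ outside some maximal subgroup $M$; then $cl(M)=cl(G)$, so $M$ has maximal class, \cite[Corollary 2.4]{AB} gives a noninner automorphism of $M$ of order $p$ fixing $\Phi(M)$ elementwise, and sending $g\mapsto g$, $m\mapsto m^{\alpha}$ extends it to a noninner automorphism of $G$ of order $p$ fixing $Z(G)$ elementwise. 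If $Z(G)\leq\Phi(G)$, Remark \ref{AB-lem} applies and yields a noninner automorphism of order $p$ fixing $\Phi(G)$, hence $Z(G)$, elementwise. Some argument of this kind is what your odd-$p$ case is missing; as written, it rests on a misreading of the cited theorem.
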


\section{\bf Proofs}
\vskip 0.4 true cm

 Let $G$ be a nonabelian finite $p$-group. We use the following facts in the proof.

\begin{rem}[{\cite[Theorem]{DS}}]\label{DS}
    If $C_G(Z(\Phi(G)))\neq \Phi(G)$, then $G$  has a noninner automorphism of order $p$ leaving   the Frattini subgroup $\Phi(G)$ elementwise fixed.
\end{rem}
\begin{rem}[{\cite[Lemma 2.2]{AB}}]\label{AB-lem}
If $d(Z_2(G)/Z(G))\neq d(G)d(Z(G))$ then $G$ has a  noninner automorphism of order $2$ leaving the Frattini  subgroup $\Phi(G)$ of $G$ elementwise fixed.
\end{rem}
\begin{rem}\label{jam}
 If $G$ has cyclic commutator subgroup, then  $G$ has a noninner automorphism of order $p$ leaving $\Phi(G)$ elementwise fixed whenever
$p > 2$, and leaving either $\Phi(G)$ or $Z(G)$ elementwise fixed whenever $p = 2$.
\end{rem}
\begin{rem}[{\cite[Corollary 3.3.4 (iii)]{leed}}]\label{leed}
A $2$-group of maximal class is either the dihedral, semidihedral or the quaternion group of order at least $2^4$.
\end{rem}
Recall that a group $G$ is called capable if it is the group of inner automorphisms
of some group, that is, if there exists a group $H$ with $H/Z(H)\cong G$.
\begin{rem}[{\cite[Theorem]{Sh}}]\label{capable}
 $Q_{2^n}$, the generalized quaternion group of order $2^n$  ($n > 2$), and $SD_{2^n}$, the semidihedral
group of order $2^n$  ($n > 3$), cannot be normal subgroups of a capable group.
\end{rem}

\noindent {\bf Proof of Theorem \ref{main}.}\\
  Let $G$ be finite $2$-group of coclass $2$ and suppose on the contrary that $G$ has no noninner automorphism of order $2$.
By Remark \ref{AB-lem} we may assume that $d(Z_2(G)/Z(G))=d(G)d(Z(G))$.
 Since $G$ is of  coclass $2$, we get $|Z(G)|=2$,
$d(G)=2$ and $Z_2(G)/Z(G)$ is elementary abelian of rank $2$. Thus
$x^2\in Z(G)$, for every $x\in Z_2(G)$ and  $[Z_2(G),\Phi(G)]=1$.
On the other hand it follows from Remark \ref{DS} that $C_G(Z(\Phi(G)))= \Phi(G)$.
Therefore   $Z_2(G)\leq Z(\Phi(G))$. Hence  $Z_2(G)$ is a noncyclic  abelian subgroup of order $2^3$.

Let $\widetilde G=G/Z_2(G)$ and let the nilpotency class $cl(\widetilde G)$ of  $\widetilde G$ denote by $k$. Then $cl(G)=k+2$.
and  $\gamma_i(G)Z_2(G)=Z_{k+3-i}(G)$ for all $i\in\{1,\dots,k\}$. In addition,  we may suppose that $k\geq 2$.

  Note that $\tilde G$ is of maximal class. Therefore by Remark  \ref{leed}, $\widetilde G$ is either the dihedral, semidihedral or  quaternion group of order at least $2^4$.

Since  $\widetilde G$ is capable,  Remark \ref{capable} implies that
$\widetilde G\cong D_{2^{k+1}}$ the dihedral  group of order $2^{k+1}$. Then $G=\langle x,y\rangle$ such that $\widetilde G=\langle \tilde x,\tilde y\mid \tilde y^{2^k}=\tilde 1,\, \tilde x^2=\tilde 1,\, \tilde y^{\tilde x}={\tilde y}^{-1}\rangle$. We obtain
\begin{itemize}
  \item
  $[x,y]=y^{2}t$ for some $t\in Z_2(G)$,
  \item
   $[x,y^2]\in y^4Z(G)$,
  %\item
  %$[x,y]^y=(y^2t)^y\in \langle y^2, Z_2(G)\rangle$,
  %\item
  %$[x,y]^x=(y^2t)^x\in \langle y^2, Z_2(G)\rangle$.
  \item
  $[x,y^4]=y^8$ and so $\langle y^4\rangle\lhd G$.
  \item
  $\gamma_i(G)\leq \langle y^{2^{i-1}}\rangle Z(G)\leq \langle y^{2^{i-1}}\rangle$, for $3\leq i\leq k+3$.
 \end{itemize}
Now since $\gamma_{k+2}(G)\leq Z(G)$, it follows that $Z(G)=\langle y^{2^{k+1}}\rangle$.
%Now $Z_2(G)$ is abelian of order $8$ and it has rank at least $2$.
Thus
$Z_2(G)=\langle y^{2^{k}}\rangle\times\langle v\rangle$, for some element $v$ of order $2$.

 Thus $t=y^{2^ki}v^j$ for some integers $i,j$. Then  $y^2t=y^{2(1+2^{k-1}i)}v^j$ and  $(y^2t)^2=y^{4(1+2^{k-1}i)}$. Therefore $G'=\langle [x,y], \gamma_3(G)\rangle\leq \langle y^2t, y^4\rangle=\langle y^2t\rangle$ is cyclic. Now it follows from Remark \ref{jam} that   $G$ has a noninner automorphism of order $2$  leaving either $\Phi(G)$ or $Z(G)$ elementwise fixed, a contradiction. This completes the proof. $\hfill \Box$ \\

Now we can prove that
\begin{thm}\label{main0}
Every finite  $p$-group of coclass $2$ has a noninner automorphism of order $p$.
\end{thm}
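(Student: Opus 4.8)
The plan is a case split on the parity of the prime, reducing the statement to two results that between them cover all finite $p$-groups of coclass $2$. If $p=2$, then Theorem \ref{main} already gives more than is claimed: every nonabelian finite $2$-group of coclass $2$ has a noninner automorphism of order $2$, indeed one leaving the centre elementwise fixed. So after disposing of the case $p=2$ in this way, only odd primes remain.

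For $p$ odd I would invoke the recent theorem of Guerboussa and Reguiat, \cite[Theorem 5.1]{yass}, which settles the conjecture for every finite $p$-group of coclass $2$ with $p$ odd, producing a noninner automorphism of order $p$; in fact that result even provides such an automorphism fixing the centre elementwise. Combining the two parity cases proves the statement for all $p$, and the same combination, using the centre-fixing refinement available in both cases, simultaneously yields the sharper Theorem \ref{main1}.

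The argument is thus essentially just a case split, and no genuine obstacle remains: the real work for $p=2$ was carried out in the proof of Theorem \ref{main}, and the odd case is taken over directly from \cite{yass}. The only point deserving a moment's attention is that these two inputs jointly exhaust all nonabelian finite $p$-groups of coclass $2$ with no side hypotheses left unchecked --- for $p=2$ this is precisely the content of the proof above, where the small or degenerate configurations for $\widetilde G = G/Z_2(G)$, as well as the possibilities $\widetilde G \cong SD_{2^{k+1}}$ and $\widetilde G \cong Q_{2^{k+1}}$, were eliminated by means of Remarks \ref{AB-lem}, \ref{DS}, \ref{jam}, \ref{leed} and \ref{capable}; and for $p$ odd it is exactly the scope of the cited theorem.
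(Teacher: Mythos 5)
Your argument is exactly the paper's: the authors also prove this by combining Theorem \ref{main} for $p=2$ with \cite[Theorem 5.1]{yass} for odd $p$. The proposal is correct and takes essentially the same approach.
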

\begin{proof}
  It follows from Theorem \ref{main} and \cite[Theorem 5.1]{yass} that  every finite $p$-group of coclass $2$ has a noninner automorphism of order $p$.
\end{proof}
\noindent{\bf Proof of Theorem \ref{main1}.}\\
Let $G$ be a finite $p$-group of coclass $2$. Thus $|Z(G)|\in\{p,p^2\}$. \\
If $|Z(G)|=p$ then every $p$-automorphism of $G$ leaving $Z(G)$ elementwise fixed. Thus Theorem \ref{main0} completes the proof.\\
Thus we may assume that $|Z(G)|=p^2$.

If $Z(G)\not\leq \Phi(G)$, then for some $g\in Z(G)$ and  some maximal subgroup $M$ of $G$ we have $g\notin M$. Thus  $cl(M)=cl(G)$ and so  $M$ is of  maximal class. It follows from \cite[Corollary 2.4.]{AB} that  $M$ has a noninner automorphism $\alpha$ of order $p$ leaving $\Phi(M)$ elementwise fixed. Now the map given by $g\mapsto g$ and $m\mapsto m^\alpha$ for all $m\in M$,
 determines a noninner automorphism of order $p$ leaving  $Z(G)$ elementwise fixed.
 Therefore we may assume that $Z(G)\leq \Phi(G)$. It follows from Remark \ref{AB-lem} that $G$ has a noninner automorphism $\alpha$ of order $p$ leaving $\Phi(G)$ elementwise fixed. Since $Z(G)\leq \Phi(G)$, $\alpha$ leaves $Z(G)$ elementwise fixed. This completes the proof. $\hfill \Box$\\

%------------------------------------------------------------------------------------%

%-----------------------------------------------------------------------------
%-----------------------------------------------------------------------------

\bigskip
\bigskip

{\footnotesize \pn{\bf Alireza Abdollahi}\; \\ Department of Mathematics, University of Isfahan, Isfahan 81746-73441, Iran\\
{\tt Email: a.abdollahi@math.ui.ac.ir}\\

{\footnotesize \pn{\bf S. M. Ghoraishi}\; \\ The Isfahan Branch of the School of Mathematics, Institute for Research in 
Fundamental Sciences (IPM) Isfahan, Iran \\
{\tt Email: ghoraishi@ipm.ir}\\

{\footnotesize \pn{\bf B. Wilkens}\; \\ Department of Mathematics, University of Botswana, Private Bag 00704, Gaborone, Botswana \\
{\tt Email: wilkensb@mopipi.ub.bw}\\

\end{document}